\documentclass[journal]{IEEEtran}
\usepackage{amsmath,amssymb,amsthm,bm,booktabs,graphicx,mathtools}
\usepackage{cite}
\usepackage[hidelinks]{hyperref}
\newtheorem{theorem}{Theorem}

\newtheorem{remark}{Remark}
\newcommand{\R}{\mathbb{R}}
\newcommand{\dd}{\,\mathrm d}
\newcommand{\jump}[1]{\left[#1\right]}
\newcommand{\avg}[1]{\left\{#1\right\}}

\newcommand{\pd}[2]{\frac{\partial #1}{\partial #2}}
\title{A Cut Finite Element Method for Transient Thermal Simulation in Multi-material Electronic Packaging Structures}
\author{Hao Dong$^{1,*}$\\[4pt]
\small $^{1}$School of Mathematics and Statistics, Xidian University, Xi'an 710071, PR China\\
\small $^*$Corresponding author: donghao@mail.nwpu.edu.cn}
\begin{document}
\maketitle

\begin{abstract}
Advanced electronic packaging structures represent a key technological approach for extending Moore's Law. An electronic packaging structure consists of multiple materials with distinct properties, constituting a composite structure with complex spatial architecture. This paper develops an unfitted cut finite element method (CutFEM) for transient heat conduction simulation in multi-material electronic packages structures with complex material interfaces. In the proposed computational framework, the heterogeneous spatial geometric features of electronic packaging structures is embedded and characterized in a regular background mesh, thereby avoiding the generation of body-fitted meshes on complicated interfaces. Interface temperature and heat-flux transmission conditions are imposed by a coefficient-weighted symmetric Nitsche formulation. In addition, a ghost-penalty stabilization controls arbitrarily small intersections between the physical materials and background meshes. Furthermore, the semi-discrete and fully-discrete numerical schemes are proposed, and an explicit energy estimate is derived in detail. Finally, two-dimensional and three-dimensional electronic packaging structures containing substrate, molding compound, die, and solder balls are designed to validate its accuracy, efficiency, and scalability in simulating challenging transient thermal problems.
\end{abstract}
\begin{IEEEkeywords}
CutFEM, Nitsche's method, ghost-penalty stabilization, multi-material interface, transient thermal simulation, heterogeneous electronic packaging structures.
\end{IEEEkeywords}

\section{Introduction}
The integrated circuit (IC) industry is a key component of the modern electronic information industry. Its level of technological development reflects a nation's core competitiveness and serves as a powerful driving force for national economic development and an essential safeguard for national security. At present, the development of the IC industry has entered the post-Moore's Law era. The IC industry has shifted from the traditional approach of merely reducing feature sizes to the use of advanced packaging technologies, especially three-dimensional (3D) packaging technology, to improve system performance. This is regarded as an important solution for extending Moore's Law \cite{R1}. Electronic packaging structures face extremely harsh dynamic thermal environments during service, with local heat flux densities reaching as high as 1 kW/cm$^2$ (kilowatt-level per square centimeter). Efficient and high-precision computational simulation of their transient heat conduction problems is one of the key fundamental scientific issues in large-scale integrated circuit design and is of great value for the optimization and design of advanced electronic packaging structures \cite{R2}.

Electronic packaging structures are typically composed of multiple materials such as the die, solder, substrate, and epoxy molding compound (EMC). Different constituent materials and components have different thermal conductivities and heat capacities, and relatively large temperature gradients usually exist near material interfaces \cite{R3,R4}. Traditional body-fitted finite elements require the mesh to conform to every material interface. When a packaging structure contains a large number of spherical solder joints, curved interfaces, or moving interfaces, mesh generation and remeshing increase the computational cost and lead to reduced accuracy and efficiency of numerical algorithms. This difficulty motivates unfitted discretizations, in which the physical geometry is represented independently of a fixed computational mesh. In 1971, Nitsche proposed a method for weakly imposing Dirichlet boundary conditions, which provided an important theoretical tool for subsequent unfitted discretizations \cite{R5}. In 2002, Hansbo and Hansbo applied Nitsche's method to elliptic interface problems, allowing material interfaces to pass through finite elements and obtaining optimal convergence results independent of the interface position \cite{R6}. To address the ill-conditioning caused by small cut cells, Burman proposed the ghost-penalty stabilization technique in 2010 \cite{R7}. Subsequently, Burman and Hansbo developed fictitious-domain finite element frameworks based on Lagrange multipliers and stabilized Nitsche methods \cite{R8}. In 2015, Burman, Claus, Hansbo, Larson, and Massing systematically formulated the CutFEM concept by combining unfitted background meshes, integration over cut cells, weak Nitsche constraints, level-set descriptions of geometry, and ghost-penalty stabilization \cite{R9}. Since then, CutFEM has since been extended to problems in fluid mechanics, solid mechanics, interface phenomena, fluid--structure interaction, two-phase flow, surface equations, moving domains, shape optimization, and multi-physics coupling problems \cite{R10,R11,R12,R13,R14,R15}. It has also developed toward higher-order discretization, adaptive computation, parameter-robust methods,  robust preconditioning, parallel strategies, and complex three-dimensional engineering applications \cite{R16,R17,R18,R19,R20,R21,R22}.

In this study, we establish a novel CutFEM framework for transient heat conduction problems of multi-material electronic packages. The innovations of this work are introduced as follows.
\begin{itemize}
\item[1)]
The transient thermal simulation of multi-material electronic packages is implemented based on CutFEM for the first time. The transient thermal simulation is successfully accomplished for complex multi-material electronic packaging structures. \item[2)]
The coefficient-weighted Nitsche technique and ghost-penalty stabilization are established for multi-material package structures, offering numerical stability and robustness.
\item[3)]
An explicit energy-norm error estimate is rigorously derived for CutFEM, ensuring simulation accuracy and simulation.
\end{itemize}

This study is organized as follows. Section II briefly introduces the time-dependent multi-material thermal model for electronic packaging structures. Section III elaborately presents the CutFEM computational framework for dynamic multi-material thermal model. Interface temperature and heat-flux transmission conditions are imposed by a coefficient-weighted symmetric Nitsche formulation, and a ghost-penalty stabilization controls arbitrarily small intersections between the physical materials and background meshes. Furthermore, corresponding numerical algorithm is presented in detail. Section IV derives the explicit energy estimate for the proposed CutFEM. Section V validates the accuracy and efficiency of CutFEM through several benchmark examples. Finally, Section IV summarizes the main contributions and conclusion of this work.
\section{MULTI-MATERIAL THERMAL MODEL}
Let the electronic packaging structure domain be $\Omega\subset\R^d$ with $d=2,3$. The domain $\Omega$ is partitioned into disjoint material subdomains $\Omega_i$ with $i=1,\ldots,N$, such that $\Omega=\bigcup_{i=1}^{N}\Omega_i$. In each subdomain, its temperature field $T_i(\bm{x},t)$ satisfies the following governing equation.
\begin{equation}
\begin{aligned}
&\rho_i(\bm{x})c_i(\bm{x})\frac{\partial T_i(\bm{x},t)}{\partial t}
-\nabla\cdot[k_i(\bm{x})\nabla T_i(\bm{x},t)]\\
&\;\;\quad\quad=Q_i(\bm{x},t),
\qquad (\boldsymbol{x},t)\in\Omega_i\times(0,\mathcal T),
\end{aligned}
\label{eq:model}
\end{equation}
where $\rho_i$, $c_i$, $k_i$ and $Q_i $ represent density, specific heat capacity, thermal conductivity, and internal  heat source, respectively.

Furthermore, the interface between material region $\Omega_i$ and material region $\Omega_j$ is defined as $\Gamma_{ij}=\partial\Omega_i\cap\partial\Omega_j$. It is assumed that on the interface $\Gamma_{ij}$, the ideal thermal contact condition is satisfied, that is,
\begin{equation}
\begin{aligned}
&[T]=T_i-T_j=0,\\
&[k\nabla T\cdot\boldsymbol{n}]=k_i\nabla T_i\cdot\boldsymbol{n}-k_j\nabla T_j\cdot\boldsymbol{n}=0.
\end{aligned}
\label{eq:interface}
\end{equation}
Moreover, the Dirichlet boundary condition is adopted on the external boundary $\Gamma_D$ of the electronic packaging structure, namely
\begin{equation}
T_i(\bm{x},t)=\widehat T(\bm{x}),\quad\text{on }\Gamma_D\times(0,\mathcal T).
\label{eq:boundary}
\end{equation}
And also, the initial temperature is set to
\begin{equation}
  T_i(\bm{x},0)=\widetilde T_{i}(\bm{x}),
  \qquad \boldsymbol{x}\in\Omega_i.
  \label{eq:initial}
\end{equation}

Furthermore, the material interfaces are described by the level set function. For example, the chip region can be expressed as
\begin{equation}
\Omega_{\rm chip}=\{\boldsymbol{x}:\phi_{\rm chip}(\boldsymbol{x})\!<\!0\},
\Gamma_{\rm chip}=\{\boldsymbol{x}:\phi_{\rm chip}(\boldsymbol{x})\!=0\!\}.
\end{equation}

\section{THEORETICAL DERIVATION AND NUMERICAL ALGORITHM}
\subsection{CutFEM space and variational formulas}
Let $\mathcal T_h$ be a regular triangular or tetrahedral background grid partitioned for the geometric region of the electronic packaging structure, and the physical region $\Omega$ is not required to coincide with the element boundaries $\mathcal T_h$. Moreover, the active grid, the set of cut elements, and the set of interior elements are defined respectively as
\begin{equation}
\mathcal{T}_h^\Omega
=\{K\in\mathcal{T}_h:K\cap\Omega\neq\varnothing\},
\end{equation}
\begin{equation}
\mathcal{T}_h^{\rm cut}=\{K\in\mathcal{T}_h:K\cap\Omega\neq\varnothing,\ K\cap\partial\Omega\neq\varnothing\},
\end{equation}
\begin{equation}
\mathcal{T}_h^{\rm int}=\{K\in\mathcal{T}_h:K\subset\Omega\},
\end{equation}
where $\mathcal{T}_h^\Omega=\mathcal{T}_h^{\rm int}\cup\mathcal{T}_h^{\rm cut}$.

Since the electronic packaging structure is a complex composite material structure, for the multi-material interface problem, the active grid is defined separately for each material region as follows.
\begin{equation}
\mathcal T_{h}^i=\{K\in\mathcal T_h:K\cap\Omega_i\neq\varnothing\}.
\end{equation}
For the two sides of the material interface, independent finite element space $V_{h}^i$ is adopted, such that
\begin{equation}
\begin{aligned}
&V_h=V_{h,1}\times V_{h,2}\times\cdots\times V_{h,N},\\
&V_h^i=\{v|_{\mathcal T_h^i}:v\text{ is piecewise polynomial}\}.
\end{aligned}
\end{equation}

Let $\{\varphi_a^i(\bm{x})\}_{a\in\mathcal{N}_h^i}$ be the finite element basis functions of the space $V_h^i$, where
$\mathcal{N}_h^i$ is the set of active degree-of-freedom indices in the material region $\Omega_i$. In CutFEM,
$\varphi_a^i(\bm{x})$ can be understood as the basis function obtained by restricting the background grid shape function to the active grid $\mathcal T_{h}^i$ or the material subdomain $\Omega_i$, and $U_a^i(t)$ is the corresponding temperature degree of freedom. Thus, the discrete temperature can be written as
\begin{equation}
  T_h(\bm{x},t)
  =
  \sum_{i=1}^{N}
  \chi_i(\bm{x})
  \sum_{a\in\mathcal{N}_h^i}
  U_a^i(t)\varphi_a^i(\bm{x}),
  \label{eq:temperature-approx}
\end{equation}
where $\chi_i(\bm{x})$ is the indicator function of the material region $\Omega_i$ as below.
\begin{equation}
  \chi_i(\bm{x})=
  \begin{cases}
  1, & \bm{x}\in\Omega_i,\\
  0, & \bm{x}\notin\Omega_i.
  \end{cases}
\end{equation}

Based on the above definitions of the grid and finite element spaces, multiplying Eq. (1) by the test function $v_i\in V_h^i$ and integrating over $\Omega_i$ yields
\begin{equation}
  \int_{\Omega_i}
  \rho_i c_i \pd{T_i}{t}v_i\dd x
  \!-\!
  \int_{\Omega_i}
  \nabla\cdot(k_i\nabla T_i)v_i\dd x
  \!=\!
  \int_{\Omega_i}Q_i v_i\dd x.
\end{equation}
Then, performing integration by parts, it follows that
\begin{equation}
\begin{aligned}
&\int_{\Omega_i}  \rho_i c_i \pd{T_i}{t}v_i\dd x
  +  \int_{\Omega_i}  k_i\nabla T_i\cdot\nabla v_i\dd x\\
&-\int_{\partial\Omega_i}
  k_i\nabla T_i\cdot n_i\,v_i\dd s
  =
  \int_{\Omega_i}Q_i v_i\dd x.
\end{aligned}
\end{equation}
Since the trial and test functions in CutFEM are allowed to be discontinuous at the interface, additional interface consistency, symmetry, and penalty terms need to be constructed.

\subsection{Interface Nitsche terms}
In order to impose the boundary condition on the material interface, the jump is defined on the interface $\Gamma_{ij}$ as
\begin{equation}
  \jump{w}=w_i-w_j,
\end{equation}
and then define the weighted average heat flux as below
\begin{equation}
  \avg{k\nabla w\cdot \boldsymbol{n}}_{ij}
  =
  \omega_i k_i\nabla w_i\cdot \boldsymbol{n}
  +
  \omega_j k_j\nabla w_j\cdot \boldsymbol{n},
  \omega_i+\omega_j=1.
\end{equation}
For multi-material problems with large differences in thermal conductivity, one may take
\begin{equation}
  \omega_i=\frac{k_j}{k_i+k_j},
  \qquad
  \omega_j=\frac{k_i}{k_i+k_j}.
  \label{eq:weights}
\end{equation}
For any $T_h,v_h\in V_h$, the symmetric Nitsche term on the interface $\Gamma_{ij}$ is defined as
\begin{equation}
\begin{aligned}
&a_{\Gamma_{ij}}(T_h,v_h)
  =-\int_{\Gamma_{ij}}
  \avg{k\nabla T_h\cdot \boldsymbol{n}}_{ij}\jump{v_h}\dd s\\
&-\int_{\Gamma_{ij}}
  \avg{k\nabla v_h\cdot \boldsymbol{n}}_{ij}\jump{T_h}\dd s+\int_{\Gamma_{ij}}
  \eta_{ij}\jump{T_h}\jump{v_h}\dd s,
  \label{eq:interface-nitsche}
\end{aligned}
\end{equation}
where the first term ensures consistency with the original problem, the second term ensures the symmetry of the bilinear form, and the third term is a penalty term used to weakly enforce the temperature continuity condition. In formula (18), the penalty parameter is usually taken as
\begin{equation}
  \eta_{ij}
  =
  \gamma_\Gamma\frac{k_{ij}}{h_{\Gamma}},
  \qquad
  k_{ij}
  =
  \frac{2k_i k_j}{k_i+k_j},
  \label{eq:penalty-interface}
\end{equation}
where $h_{\Gamma}$ is the local mesh size near the interface, and $\gamma_\Gamma>0$ is a sufficiently large dimensionless parameter. In engineering computations, it is commonly taken as $\gamma_\Gamma=10\sim100$, and can be adjusted according to the element order, the interface cut shape, and the jumps in material parameters.

\subsection{Weak enforcement of outer boundary conditions}
If the outer boundary $\Gamma_D$ conforms to the background mesh, the outer boundary conditions can be imposed strongly. If the outer boundary is also described by level-set cutting, the Nitsche method can be used to weakly enforce the constant-temperature condition. Denote
\begin{equation}
  \Gamma_D^i=\Gamma_D\cap\partial\Omega_i.
\end{equation}
Furthermore, define the outer boundary Nitsche bilinear form as below
\begin{equation}
\begin{aligned}
  a_D(T_h,v_h)
  =
  \sum_{i=1}^{N}
  \bigg[
  -\int_{\Gamma_D^i}
  k_i\nabla T_{h,i}\cdot \boldsymbol{n}\,v_i\dd s\\
  -\int_{\Gamma_D^i}
  k_i\nabla v_i\cdot \boldsymbol{n}\,T_{h,i}\dd s+\int_{\Gamma_D^i}
  \eta_D T_{h,i}v_i\dd s
  \bigg],
  \label{eq:dirichlet-bilinear}
\end{aligned}
\end{equation}
and the corresponding right-hand side term is
\begin{align}
  \ell_D(v_h)
 \!=\!
  \sum_{i=1}^{N}
  \bigg[
  \!-\!\int_{\Gamma_D^i}
  k_i\nabla v_i\cdot \boldsymbol{n}\,\widehat T\dd s\!+\!\int_{\Gamma_D^i}
  \eta_D \widehat T v_i\dd s
  \bigg],
  \label{eq:dirichlet-linear}
\end{align}
where the boundary stabilization parameter $\eta_D$ is denoted as below
\begin{equation}
  \eta_D=\gamma_D\frac{k_i}{h}.
\end{equation}

\subsection{Ghost-penalty stabilization term}
In CutFEM, the interface may cut out very small material subregions $K_i$, which can lead to ill-conditioned mass or stiffness matrices. To improve the condition number and enhance stability on small cut cells, a ghost-penalty stabilization term is usually added on the interior faces near the cut cells.

Let $\mathcal{F}_{h,\Gamma}^{i}$ denote the set of interior faces in the interface neighborhood of material $\Omega_i$, where $F$ is one such interior face and $n_F$ is its unit normal vector. One may take
\begin{equation}
  g_h(T_h,v_h)
  =
  \sum_{i=1}^{N}
  \sum_{F\in\mathcal{F}_{h,\Gamma}^{i}}
  \sum_{m=1}^{p}
  \gamma_g k_i h_F^{2m-1}
  \int_F
  \jump{\partial_{n_F}^{m}T_{h,i}}
  \jump{\partial_{n_F}^{m}v_i}
  \dd s,
  \label{eq:ghost}
\end{equation}
where $\gamma_g>0$ is the stabilization parameter. This term remains consistent for smooth exact solutions and can suppress the numerical instability caused by small cut cells. $h_F$ denotes the local mesh size near the interior face $F$, and is usually taken as the average or the maximum of the diameters of the background elements adjacent to both sides of $F$, or approximately taken as the global mesh size $h$ on quasi-uniform meshes. The index $m$ denotes the order of the normal derivative, and $\partial_{\bm n_F}^{m}$ denotes the $m$-th order normal derivative along the unit normal vector
$\bm n_F$ of the interior face. The factor $h_F^{2m-1}$ is a scaling factor used to balance the derivative jump integral defined on the $(d-1)$-dimensional interior face, so that its dimension matches that of the main stiffness term. In particular, for linear elements with $p=1$, only the $m=1$ term is present, and the scaling factor is then $h_F$. This term remains consistent for smooth exact solutions and can suppress the numerical instability caused by small cut cells.

\subsection{Semi-discrete weak form}
Define the mass bilinear form
\begin{equation}
  m_h(T_h,v_h)
  =
  \sum_{i=1}^{N}
  \int_{\Omega_i}
  \rho_i c_i T_{h,i}v_i\dd x,
  \label{eq:mass}
\end{equation}
Moreover, define volume integral stiffness term
\begin{equation}
  a_\Omega(T_h,v_h)
  =
  \sum_{i=1}^{N}
  \int_{\Omega_i}
  k_i\nabla T_{h,i}\cdot\nabla v_i\dd x,
  \label{eq:volume-stiffness}
\end{equation}
and the total bilinear form is defined as follows
\begin{equation}
\begin{aligned}
&a_h(T_h,v_h)
  =
  a_\Omega(T_h,v_h)
  +a_\Gamma(T_h,v_h)
  \\
&+
  a_D(T_h,v_h)
  +
  g_h(T_h,v_h),
  \label{eq:ah}
\end{aligned}
\end{equation}
where $a_\Gamma(T_h,v_h)=\sum_{i<j}a_{\Gamma_{ij}}(T_h,v_h)$. In addition, define the right-hand side functional as
\begin{equation}
  \ell_h(v_h)
  =
  \sum_{i=1}^{N}
  \int_{\Omega_i}Q_i v_i\dd x
  +
  \ell_D(v_h).
  \label{eq:lh}
\end{equation}

Thus, the semi-discrete CutFEM scheme is: find $T_h(\bm{x},t)$ such that for any $v_h\in V_h$,
\begin{equation}
  m_h\left(\pd{T_h}{t},v_h\right)
  +
  a_h(T_h,v_h)
  =
  \ell_h(v_h).
  \label{eq:semi-discrete}
\end{equation}

\subsection{Fully discrete weak form}
Let $t_n=n\Delta t$, and let $T_h^n$ denote the numerical solution at time $t_n$. The following $\theta$-scheme is adopted:
\begin{equation}
  T_h^{n+\theta}
  =
  \theta T_h^{n+1}
  +
  (1-\theta)T_h^n,
  \qquad 0\leq\theta\leq1.
\end{equation}
Then the fully discrete scheme is: find $T_h^{n+1}\in V_h$ such that for any $v_h\in V_h$,
\begin{equation}
  m_h\left(
  \frac{T_h^{n+1}-T_h^n}{\Delta t},
  v_h
  \right)
  +
  a_h(T_h^{n+\theta},v_h)
  =
  \ell_h^{n+\theta}(v_h).
  \label{eq:theta}
\end{equation}
When $\theta=1$, it is the backward Euler scheme; when $\theta=0$, it is the forward Euler scheme; and when $\displaystyle\theta=\frac{1}{2}$, it is the Crank-Nicolson scheme.

\subsection{Numerical algorithm}
Based on the above theoretical analysis, the CutFEM numerical algorithm for the multi-material heat conduction problem of electronic packaging structures is established as follows:
\begin{enumerate}
\item[Step 1:] Generate a triangular or tetrahedral mesh in a regular background region containing the electronic packaging structure;
\item[Step 2:] Identify material regions, active elements, and cut elements according to the level-set function;
\item[Step 3:] Assemble the mass matrix, thermal conductivity matrix, Nitsche interface matrix, and ghost-penalty matrix;
\item[Step 4:] Advance time using the backward Euler method or other methods;
\item[Step 5:] Solve a sparse linear system at each time step;
\item[Step 6:] Output the temperature field of the electronic packaging structure at a fixed time.
\end{enumerate}

\section{ERROR ESTIMATION}
Based on the multi-material active mesh and finite element space defined in Section 3, this chapter presents an error analysis framework for the CutFEM solution of transient heat conduction problems in electronic packaging structures. Let \(\Omega=\bigcup_{i=1}^N\Omega_i\)£¬\(\Gamma_I=\bigcup_{i<j}\Gamma_{ij}\), and \(\mu_i(\bm{x})=\rho_i(\bm{x}) c_i(\bm{x})\), satisfying
\[
0<\mu_{\min}\leq\mu_i(\bm{x})\leq\mu_{\max},\\0<k_{\min}\leq k_i(\boldsymbol x)\leq k_{\max}<\infty.
\]

\subsection{Continuity, consistency, and energy norm}
For \(v=(v_1,\ldots,v_N)\in V_h\), define the energy norm as
\begin{equation}
\begin{aligned}
\|v\|_{E_h}^2&=\sum_{i=1}^N\|k_i^{1/2}\nabla v_i\|_{0,\Omega_i}^2
+\sum_{i<j}\frac{k_{ij}}{h_{ij}}\|[v]\|_{0,\Gamma_{ij}}^2\\
&+\sum_{i=1}^N\frac{k_i}{h_i}\|v_i\|_{0,\Gamma_D^i}^2
+g_h(v,v),
\label{eq:energy-norm-optimized}
\end{aligned}
\end{equation}
where \(h_{ij}\)ºÍ\(h_i\) are the local mesh sizes near the interface and the outer boundary, respectively. The mass norm is defined as
\begin{equation}
 \|v\|_{M_h}^2=\sum_{i=1}^N\mu_i\|v_i\|_{0,\Omega_i}^2.
 \label{eq:mass-norm-optimized}
\end{equation}

By the weighted trace inequality, for any \(\varepsilon>0\), we have
\begin{equation}
\begin{aligned}
&2\left|\int_{\Gamma_{ij}}\avg{k\nabla v\cdot\boldsymbol n}_{ij}[v]\,\dd s\right|\\
&\leq \varepsilon\sum_{r\in\{i,j\}}\|k_r^{1/2}\nabla v_r\|_{0,\omega_{ij}}^2+C_\varepsilon\frac{k_{ij}}{h_{ij}}\|[v]\|_{0,\Gamma_{ij}}^2,
\end{aligned}
\end{equation}
\begin{equation}
2\left|\!\int_{\Gamma_D^i}k_i\nabla v_i\cdot\boldsymbol n\,v_i\,\!\!\dd s\!\right|
\!\!\leq\!\! \varepsilon\|k_i^{1/2}\nabla v_i\|_{0,\omega_D^i}^2
\!+\!C_\varepsilon\frac{k_i}{h_i}\|v_i\|_{0,\Gamma_D^i}^2.
\end{equation}
By choosing \(\gamma_\Gamma\) and \(\gamma_D\) larger than the threshold determined by the trace constant, and using the ghost-penalty extension estimate, we obtain
\begin{equation}
 a_h(v_h,\!v_h)\!\!\geq\!\!c_0\|v_h\|_{E_h}^2,
 |a_h(w_h,\!v_h)|\!\!\leq\!\!c_1\|w_h\|_{E_h}\|v_h\|_{E_h},
 \label{eq:coercivity-continuity-optimized}
\end{equation}
where \(c_0,c_1>0\) are independent of the position of the interface relative to the background mesh; for a fixed material contrast, shape regularity, and range of penalty parameters, they are also independent of \(h\).

If \(T=(T_1,\ldots,T_N)\) is a piecewise smooth exact solution, then \([T]=0\) and \([k\nabla T\cdot\boldsymbol n]=0\) hold on \(\Gamma_{ij}\). Adding the interface integration-by-parts formulas from Section 3, the interface flux terms are exactly compensated by \(a_{\Gamma_{ij}}(T,v_h)\); similarly, the outer boundary terms are compensated by \(a_D(T,v_h)\) and \(\ell_D(v_h)\). Therefore, the following consistency relation is obtained:
\begin{equation}
 m_h(\frac{\partial T}{\partial t},v_h)+a_h(T,v_h)=\ell_h(v_h),
 \qquad \forall v_h\in V_h.
 \label{eq:consistency-optimized}
\end{equation}

\subsection{Elliptic projection and spatial approximation}
For each fixed time \(t\), define the elliptic projection \(R_hT(\bm{x},t)\in V_h\):
\begin{equation}
 a_h(R_hT(\bm{x},t)-T(\bm{x},t),v_h)=0,
 \qquad \forall v_h\in V_h.
 \label{eq:projection-optimized}
\end{equation}
Based on \eqref{eq:coercivity-continuity-optimized}, piecewise \(H^2\) regularity, and the extension operator on the active mesh, if \(T_i(\bm{x},t)\in H^2(\Omega_i)\), then
\begin{align}
 \|T-R_hT\|_{E_h}
 &\leq C h\left(\sum_{i=1}^N\|T_i\|_{H^2(\Omega_i)}^2\right)^{1/2},
 \\
 \|T-R_hT\|_{M_h}
 &\leq C h^2\left(\sum_{i=1}^N\|T_i\|_{H^2(\Omega_i)}^2\right)^{1/2}.
 \label{eq:projection-estimate-optimized}
\end{align}
Here, the second estimate requires the dual problem to have piecewise \(H^2\) regularity; if there are corners, cusps, or non-smooth intersection curves on the interface or outer boundary, only a correspondingly reduced order can be guaranteed.

\subsection{Backward Euler error equation}
Let \(t_n=n\Delta t\), and adopt the backward Euler scheme from Section 3. Define the total error
\begin{equation}
\begin{aligned}
&e^n=T(\bm{x},t_n)-T_h^n=\eta^n+\xi^n,\\
&\eta^n=T(\bm{x},t_n)-R_hT(\bm{x},t_n),\\
&\xi^n=R_hT(\bm{x},t_n)-T_h^n.
\end{aligned}
\end{equation}
By subtracting the discrete scheme and the projection definition from \eqref{eq:consistency-optimized}, it follows that for any \(v_h\in V_h\):
\begin{equation}
\begin{aligned}
&m_h\left(\frac{\xi^{n+1}-\xi^n}{\Delta t},v_h\right)
 +a_h(\xi^{n+1},v_h)\\
&=m_h(\tau^{n+1},v_h)
 -m_h\left(\frac{\eta^{n+1}-\eta^n}{\Delta t},v_h\right),
 \label{eq:error-equation-optimized}
\end{aligned}
\end{equation}
where $\tau^{n+1}$ is defined as below
\begin{equation}
 \tau^{n+1}=\frac{\partial T(\bm{x},t_{n+1})}{\partial t}-
 \frac{T(\bm{x},t_{n+1})-T(\bm{x},t_n)}{\Delta t}.
\end{equation}

Next, the Taylor formula gives
\begin{equation}
 \Delta t\sum_{n=0}^{N-1}\|\tau^{n+1}\|_{M_h}^2
 \leq C\Delta t^2\int_0^{t_N}
 \|\frac{\partial^2T(\bm{x},s)}{\partial s^2}\|_{M_h}^2\,\dd s.
 \label{eq:time-consistency-optimized}
\end{equation}
Furthermore, taking \(v_h=\xi^{n+1}\) in (42) and using the following inequality
\[
2m_h(a-b,a)=\|a\|_{M_h}^2-\|b\|_{M_h}^2+\|a-b\|_{M_h}^2,
\]
From \eqref{eq:coercivity-continuity-optimized} and Young's inequality, we obtain
\begin{equation}
\begin{aligned}
&\|\xi^{n+1}\|_{M_h}^2\!\!-\!\|\xi^n\|_{M_h}^2
\!\!+\!\|\xi^{n+1}\!-\!\xi^n\|_{M_h}^2
\!\!+\!c_0\Delta t\|\xi^{n+1}\|_{E_h}^2\\
&\leq C\Delta t\|\tau^{n+1}\|_{M_h}^2
 +C\Delta t\left|
 \frac{\eta^{n+1}-\eta^n}{\Delta t}\right|_{M_h}^2.
 \label{eq:energy-step-optimized}
\end{aligned}
\end{equation}
Summing over \(n=0,\ldots,N-1\) and using \eqref{eq:projection-estimate-optimized} for the estimate of \(\eta_t\), we obtain
\begin{equation}
\begin{aligned}
&\|\xi^N\|_{M_h}^2
 +\Delta t\sum_{n=0}^{N-1}\|\xi^{n+1}\|_{E_h}^2\\
&\leq C\left[
 h^4\int_0^{t_N}\sum_{i=1}^N\|\frac{\partial T_i(s)}{\partial s}\|_{H^2(\Omega_i)}^2\,\dd s\right.\\
&\left.
 +\Delta t^2\int_0^{t_N}\|\frac{\partial^2T(\bm{x},s)}{\partial s^2}\|_{M_h}^2\,\dd s+\|\xi^0\|_{M_h}^2\right].
 \label{eq:xi-estimate-optimized}
\end{aligned}
\end{equation}
\begin{theorem}[Error estimate of the backward Euler CutFEM scheme]
Assume that the background mesh is shape-regular, \(\Gamma_{ij}\) and \(\Gamma_D^i\) are piecewise \(C^2\) surfaces, the active mesh in Section 3 satisfies the adjacency chain condition with uniformly bounded length, the ghost penalty covers the stabilization neighborhood of each cut cell, and the material coefficients satisfy the above upper and lower bounds. If
\begin{equation*}
\begin{aligned}
&T_i\in L^\infty(0,t_N;H^2(\Omega_i)),\\
&\frac{\partial T_i}{\partial t}\in L^2(0,t_N;H^2(\Omega_i)),\\
&\frac{\partial^2 T}{\partial t^2}\in L^2(0,t_N;L^2(\Omega)),
\end{aligned}
\end{equation*}
and \(\|\xi^0\|_{M_h}\leq Ch^2\), then
\begin{equation}
\begin{aligned}
&\|T(\bm{x},t_N)\!-\!T_h^N\|_{M_h}^2\!+\!\Delta t\sum_{n=0}^{N-1}\|T(\bm{x},t_{n+1})\!-\!T_h^{n+1}\|_{E_h}^2\\
&\leq C\left(h^4+\Delta t^2\right).
\label{eq:final-error-optimized}
\end{aligned}
\end{equation}
where the constant \(C\) is independent of the interface cut position and \(h\), but may depend on the material contrast, the interface regularity, the penalty parameters, and the corresponding norms of the exact solution.
\end{theorem}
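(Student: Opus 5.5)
The proof combines the discrete estimate \eqref{eq:xi-estimate-optimized} with the projection bounds \eqref{eq:projection-estimate-optimized} through the splitting $e^n=\eta^n+\xi^n$ and the triangle inequality. Concretely, I would write
\[
\|e^N\|_{M_h}^2\leq 2\|\eta^N\|_{M_h}^2+2\|\xi^N\|_{M_h}^2,
\]
and similarly
\[
\Delta t\sum_{n=0}^{N-1}\|e^{n+1}\|_{E_h}^2\leq 2\Delta t\sum_{n=0}^{N-1}\|\eta^{n+1}\|_{E_h}^2+2\Delta t\sum_{n=0}^{N-1}\|\xi^{n+1}\|_{E_h}^2 .
\]
The $\xi$-contributions are handled in three steps.
\begin{enumerate}
\item[1)] Apply \eqref{eq:xi-estimate-optimized} directly.
\item[2)] Use the assumption $\|\xi^0\|_{M_h}\leq Ch^2$ for the initial term.
\item[3)] Bound the remaining terms by the regularity hypotheses: $\partial_tT_i\in L^2(0,t_N;H^2(\Omega_i))$ gives the $h^4$ integral, and $\partial_t^2T\in L^2(0,t_N;L^2(\Omega))$ together with $\mu\leq\mu_{\max}$ gives the $\Delta t^2$ integral.
\end{enumerate}
This yields $C(h^4+\Delta t^2)$ for the $\xi$-part. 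Before using \eqref{eq:xi-estimate-optimized}, I would check how it arises from \eqref{eq:energy-step-optimized}. The term $\Delta t^{-1}(\eta^{n+1}-\eta^n)$ is written as the time average of $\partial_t\eta=\partial_tT-R_h\partial_tT$, using that $R_h$ commutes with $\partial_t$ because $a_h$ is time independent. Cauchy--Schwarz in time and the $M_h$ projection estimate then give $Ch^4\int_0^{t_N}\sum_i\|\partial_tT_i\|_{H^2}^2$. These constants inherit independence of the cut position from $c_0$ in \eqref{eq:coercivity-continuity-optimized}.

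For the $\eta$-contributions in the mass norm, \eqref{eq:projection-estimate-optimized} at $t=t_N$ together with $T_i\in L^\infty(0,t_N;H^2(\Omega_i))$ gives $\|\eta^N\|_{M_h}^2\leq Ch^4$ at once.

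The main obstacle is the energy-norm term $\Delta t\sum_n\|\eta^{n+1}\|_{E_h}^2$. The first projection estimate only gives $\|\eta\|_{E_h}\leq Ch$, hence a contribution of order $h^2 t_N$, which is not $O(h^4)$. To reach the stated rate one needs $\|T-R_hT\|_{E_h}\leq Ch^2$. My first attempt would be to run the projection argument with polynomial degree $p\geq2$. The interpolant on the active meshes $\mathcal T_h^i$ is built from an $H^3$ extension of $T_i$. The interface, boundary and ghost-penalty pieces of $\|\cdot\|_{E_h}$ are then controlled by trace inequalities on cut cells and the adjacency-chain condition. Best approximation in $\|\cdot\|_{E_h}$, via coercivity and continuity \eqref{eq:coercivity-continuity-optimized}, then gives $O(h^2)$.

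This requires $T_i\in L^\infty(0,t_N;H^3(\Omega_i))$, which is stronger than the stated $H^2$ hypothesis. With only the stated $H^2$ regularity and linear elements, I expect this step to fail. In that setting the energy sum is bounded at rate $h^2+\Delta t^2$ rather than $h^4+\Delta t^2$, and I would flag that as the precise point where the claimed bound needs the strengthened assumption.
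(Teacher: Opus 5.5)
Your route is the paper's route: error equation from the consistency relation, test with $\xi^{n+1}$, sum in time to reach \eqref{eq:xi-estimate-optimized}, split $e^n=\eta^n+\xi^n$, then apply the projection bounds. Your treatment of the $\xi$-part and of $\|\eta^N\|_{M_h}$ is sound. The obstruction you isolate is genuine, and it is exactly where the paper's own proof breaks. The paper concludes by ``combining with $\|\eta^n\|_{M_h}\leq Ch^2$ and $\|\eta^n\|_{E_h}\leq Ch$'', but the second bound only gives $\Delta t\sum_{n=0}^{N-1}\|\eta^{n+1}\|_{E_h}^2\leq Ct_Nh^2$. Nothing in the paper supplies the missing factor $h^2$.

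For the linear elements used in Section V the stated bound is in fact false, not merely unproved. Take quasi-uniform meshes, and suppose the spatial Hessian of $T_i$ is bounded away from zero on a ball $B\subset\Omega_i$ for $t$ in an interval $I$. Since $\nabla T_{h,i}^{n+1}$ is constant on each element, for every element $K\subset B$ and $h$ small, $\|\nabla(T_i(\cdot,t_{n+1})-T_{h,i}^{n+1})\|_{0,K}\geq\inf_{\bm a\in\R^d}\|\nabla T_i(\cdot,t_{n+1})-\bm a\|_{0,K}\geq c\,h\,|K|^{1/2}$. Hence $\|e^{n+1}\|_{E_h}\geq c'h$ whenever $t_{n+1}\in I$. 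With $\Delta t=h^2$, the left-hand side is then at least of order $|I|h^2$, while the right-hand side is $O(h^4)$. The paper's own Remark 1, which claims only $O(h+\Delta t)$ in the energy norm, agrees with your rate and not with the theorem.

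So no idea is missing on your side; the statement itself has to be corrected. Under the given hypotheses your argument yields $\|e^N\|_{M_h}^2\leq C(h^4+\Delta t^2)$ and $\Delta t\sum_{n=0}^{N-1}\|e^{n+1}\|_{E_h}^2\leq C(h^2+\Delta t^2)$. The $h^4$ rate in the energy sum needs your strengthened setting: $p\geq 2$ and $H^3$ spatial regularity, with the higher-order ghost-penalty jumps consistent on the extended solution.

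Two ingredients are left tacit by both you and the paper:
\begin{enumerate}
\item The passage to \eqref{eq:energy-step-optimized}, whose right-hand side contains no $\|\xi^{n+1}\|_{M_h}$, needs a broken Poincar\'e--Friedrichs inequality $\|v_h\|_{M_h}\leq C\|v_h\|_{E_h}$. This is available because $\|\cdot\|_{E_h}$ contains the jump and weak Dirichlet terms; alternatively, keep $\|\xi^{n+1}\|_{M_h}^2$ and use a discrete Gronwall argument.
\item The $M_h$ projection bound behind every $h^4$ term requires piecewise $H^2$ regularity of the dual interface problem. ``Piecewise $C^2$'' interfaces with corners and multi-material junctions do not provide this, as the paper itself concedes right after \eqref{eq:projection-estimate-optimized}.
\end{enumerate}
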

\begin{proof}
The error equation \eqref{eq:error-equation-optimized} is obtained from the consistency relation. Taking the test function
\(v_h=\xi^{n+1}\) and using the mass norm identity, discrete coercivity, and Young's inequality, we obtain \eqref{eq:energy-step-optimized}. Summing over the time levels and then using the time truncation estimate \eqref{eq:time-consistency-optimized} and the projection estimate \eqref{eq:projection-estimate-optimized}, we obtain \eqref{eq:xi-estimate-optimized}. Finally, from
\[
 T(\bm{x},t_n)-T_h^n=\eta^n+\xi^n
\]
and the triangle inequality, combining with \(\|\eta^n\|_{M_h}\leq Ch^2\) and \(\|\eta^n\|_{E_h}\leq Ch\), we obtain \eqref{eq:final-error-optimized}.
\end{proof}
\begin{remark}
From Theorem 1, the time-accumulated error in the mass norm is \(\|T-T_h\|_{M_h}=O(h^2+\Delta t)\), and the time-accumulated error in the energy norm is \(\|T-T_h\|_{E_h}=O(h+\Delta t)\). If inexact cut integration, a stabilization term that does not cover the complete ghost layer, a non-smooth interface, or only piecewise \(H^1\) regularity is used, then the corresponding approximation order needs to be reduced, and \eqref{eq:final-error-optimized} cannot be used directly.
\end{remark}
\begin{remark}
From Theorem 1, the error estimate of the Crank-Nicolson CutFEM scheme is $\|T(\bm{x},t_N)-T_h^N\|_{M_h}^2
+\Delta t\sum_{n=0}^{N-1}\|T(\bm{x},t_{n+1})-T_h^{n+1}\|_{E_h}^2
\leq C\left(h^4+\Delta t^4\right)$, the time-accumulated error in the mass norm is \(\|T-T_h\|_{M_h}=O(h^2+\Delta t^2)\), and the time-accumulated error in the energy norm is \(\|T-T_h\|_{E_h}=O(h+\Delta t^2)\).
\end{remark}
\section{NUMERICAL EXPERIMENTS AND DISCUSSION}
This chapter verifies the effectiveness of the developed CutFEM for simulating heat conduction in electronic packaging structures through numerical experiments. All numerical experiments in this paper are carried out on the same HP mobile workstation, whose CPU type is an Intel(R) Core(TM) Ultra 7 255H processor with a main frequency of 2.00 GHz and 32 GB of memory. Since analytical or exact solutions can hardly be obtained for heat conduction problems in electronic packaging structures, a fine finite element solution on a refined mesh is used as the reference solution. Let the fine finite element reference solution be denoted by \(T_{\mathrm{FEM}}\), and the CutFEM solution by \(T_{\mathrm{CutFEM}}\). Define the error function $e_h(\bm{x},t)=T_{\mathrm{CutFEM}}(\bm{x},t)-T_{\mathrm{FEM}}(\bm{x},t)$, and then define the following relative
$L^2$ norm and $L^{\infty}$  norm errors to evaluate the numerical accuracy of CutFEM:
\begin{align}
E_{L^2}(t)=\frac{\|e_h\|_2}{\|T_{\mathrm{FEM}}\|_2},\;\; E_{L^\infty}(t)=\frac{\|e_h\|_\infty}{\|T_{\mathrm{FEM}}\|_\infty}.
\end{align}
\subsection{Two-dimensional electronic packaging structure}
This example computes the transient heat conduction behavior of a two-dimensional electronic packaging structure. The two-dimensional electronic packaging structure consists of four materials: substrate, epoxy molding compound, chip, and solder balls. The substrate is located in the lower part of the region, the epoxy molding compound fills the remaining background region, the chip is located above the solder ball array, and the solder balls are arranged between the substrate and the chip, as shown in Figure~\ref{fig:2d-geometry}. The two-dimensional computational domain is $\Omega=(x_1,x_2)=[0,12]\times[0,6]$, the substrate region is $x_2\leq2$, the chip region is $|x_1-6|\leq2$ and $2.76\leq x_2\leq3.96$, the centers of the three solder balls are $(4.4,2.38)$, $(6.0,2.38)$ and $(7.6,2.38)$, respectively, with a radius of $0.42$, and the remaining part is the molding compound region. Therefore, the lower end of the solder balls $x_2=2.00$ is in contact with the top surface of the substrate, and the upper end $x_2=2.76$ is in contact with the bottom surface of the chip.
\begin{figure}[htbp]
\centering
\includegraphics[width=\columnwidth]{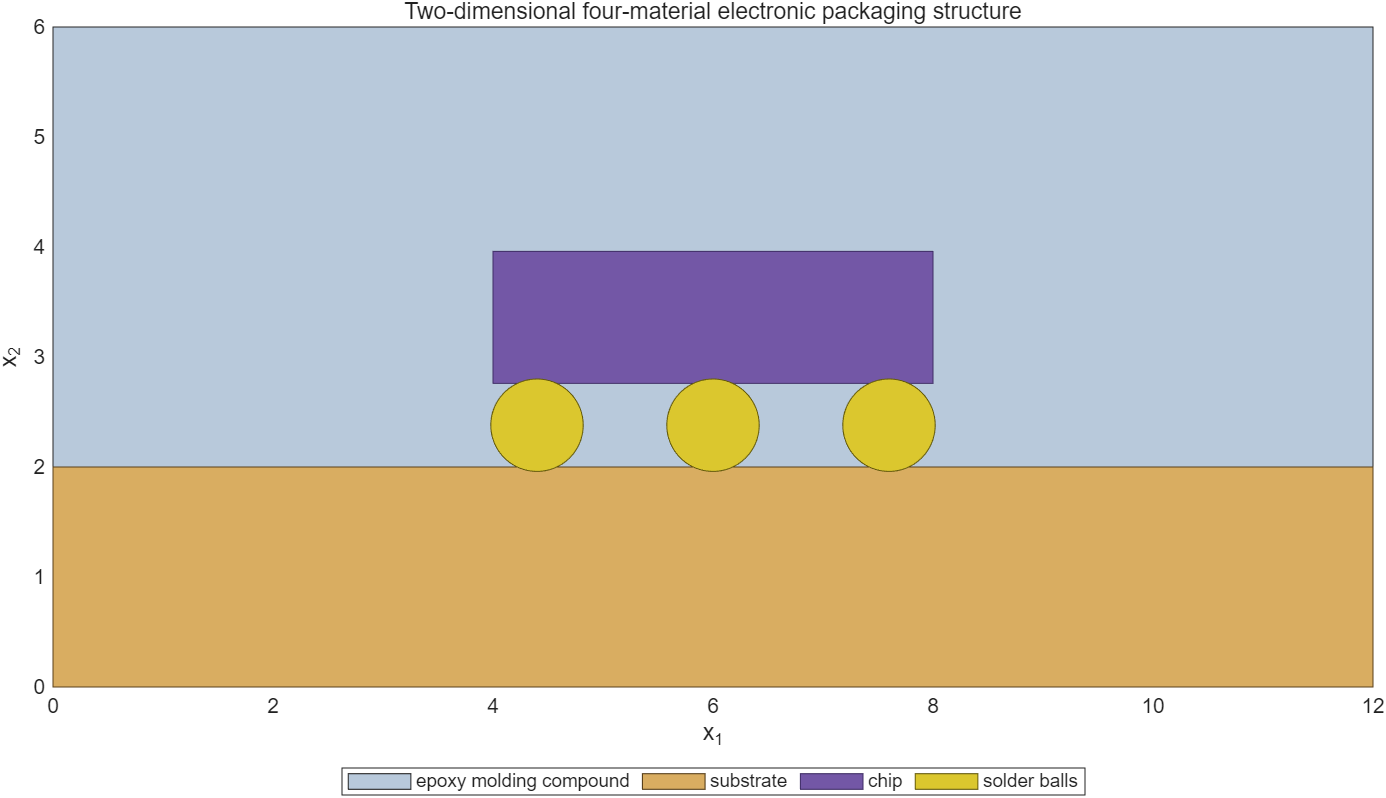}
\caption{Schematic diagram of the two-dimensional electronic packaging structure.}
\label{fig:2d-geometry}
\end{figure}

A internal heat source of 0.04 is applied to the chip region of the electronic packaging structure, and both the initial temperature and the outer boundary temperature are taken as 25. In addition, the parameters of the four materials¡ªsubstrate, epoxy molding compound, chip, and solder balls¡ªof the electronic packaging structure are shown in Table~\ref{tab:2d-material}.
\begin{table}[htbp]
\centering
\caption{Material parameters of each part of the electronic packaging structure.}
\label{tab:2d-material}
\begin{tabular}{cccc}
\toprule
Material & ID & $k_i$ & $\rho_i c_i$ \\
\midrule
Substrate & 1 & $0.003$ & $0.00160$ \\
Epoxy molding compound & 2 & $0.0008$ & $0.00130$ \\
Chip & 3 & $0.130$ & $0.00163$ \\
Solder balls & 4 & $0.050$ & $0.00150$ \\
\bottomrule
\end{tabular}
\end{table}

Next, based on the CutFEM method and algorithm established above, the heat conduction problem of the electronic packaging structure is solved. The temperature continuity condition and heat flux continuity condition on the material interfaces are weakly enforced using the multi-space Nitsche method, the ghost penalty near the interfaces is used to stabilize small cut cells, and the backward Euler scheme is used for time discretization. The computational time interval is uniformly taken as
$t\in[0,1]$, and the time step is set as $\Delta t=0.02$. For the two-dimensional computation, a regular Cartesian background mesh is used. The background rectangular mesh is first divided into $48\times24$ small rectangles along the $x_1$ and $x_2$ directions, respectively, and then each small rectangle is split into two triangles along the same diagonal. In addition, the mesh for refined FEM is obtained by dividing the domain into $192\times96$ small rectangles along the $x_1$ and $x_2$ directions, respectively, and then splitting each small rectangle into two triangles along the same diagonal. Figure~\ref{fig:mesh2d} shows the detailed CutFEM mesh and refined FEM mesh for two-dimensional electronic packaging structure.
\begin{figure}[!t]
\centering\includegraphics[width=\columnwidth]{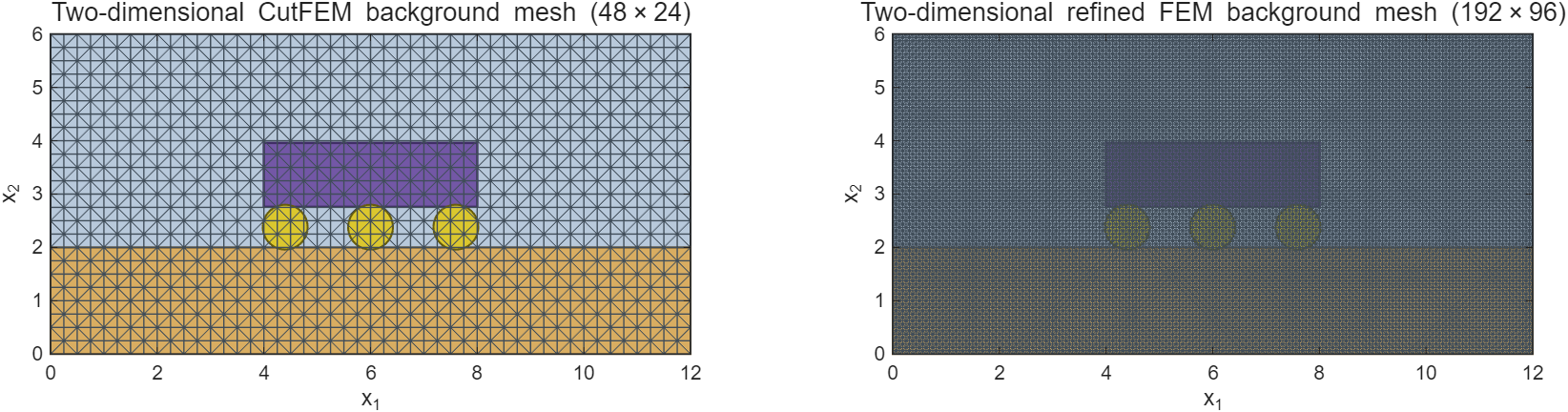}
\caption{CutFEM mesh and refined FEM mesh for the two-dimensional electronic packaging structure.}
\label{fig:mesh2d}
\end{figure}

After the numerical simulation is completed, Figure~\ref{fig:temperaturecompare} presents a comparison of the temperature fields at the final time $t=1.0$ computed by CutFEM and the fine FEM, Figure~\ref{fig:errorcompare} shows the absolute error distribution, Figure~\ref{fig:errorhistory} shows the evolutive relative errors of temperature field, and Figure~\ref{fig:parameter} presents an analysis of the influence of the Nitsche penalty parameter and the ghost-penalty parameter on the accuracy and efficiency of CutFEM. Table~\ref{tab:2dcompare} provides a comparison of the computational resource cost, numerical accuracy, and computational efficiency of the two methods.
\begin{figure}[!t]
\centering\includegraphics[width=\columnwidth]{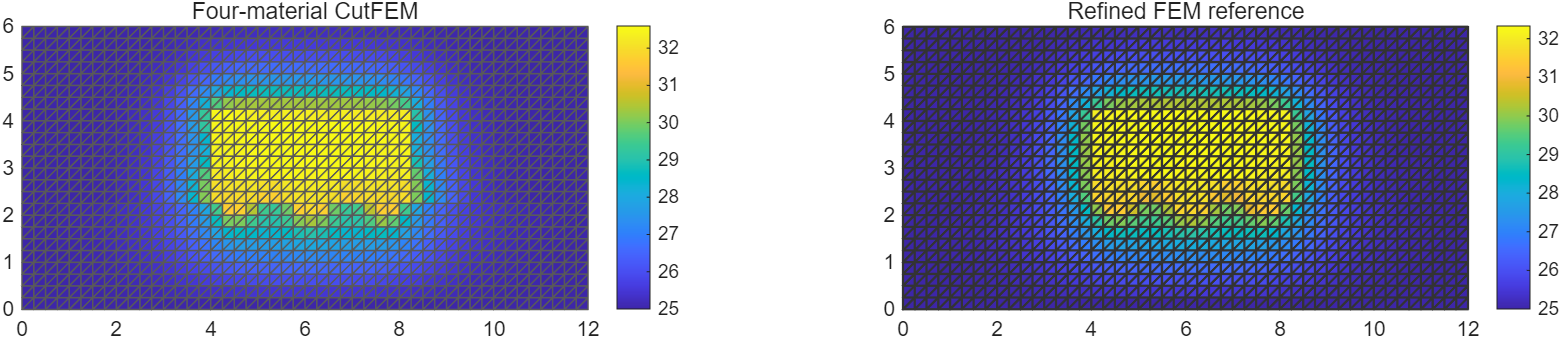}
\caption{Temperature fields at $t=1.0$ obtained by CutFEM and refined FEM.}
\label{fig:temperaturecompare}
\end{figure}

\begin{figure}[!t]
\centering\includegraphics[width=\columnwidth]{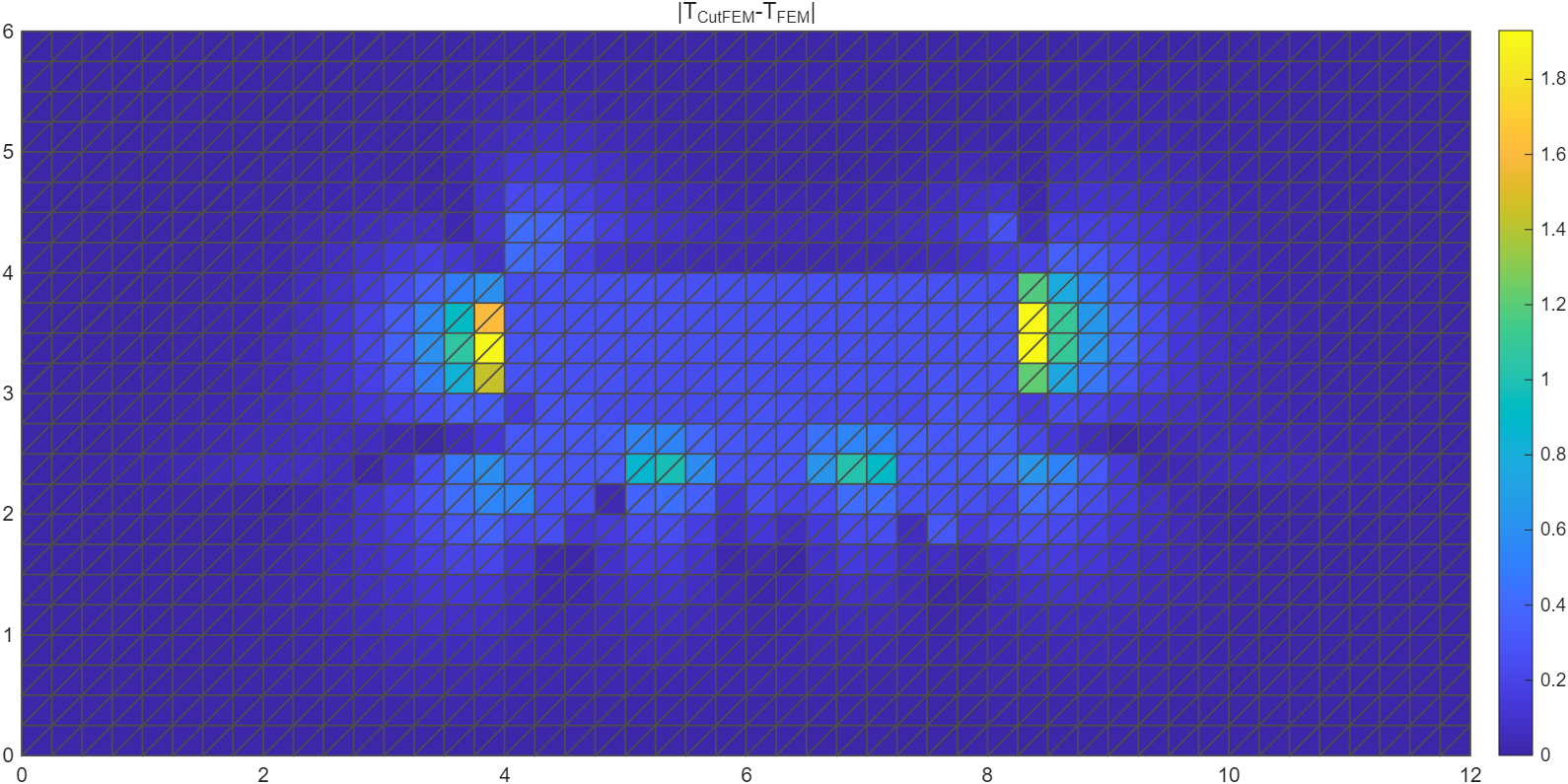}
\caption{Absolute value of the error between CutFEM and refined FEM.}
\label{fig:errorcompare}
\end{figure}

\begin{figure}[!t]
\centering\includegraphics[width=\columnwidth]{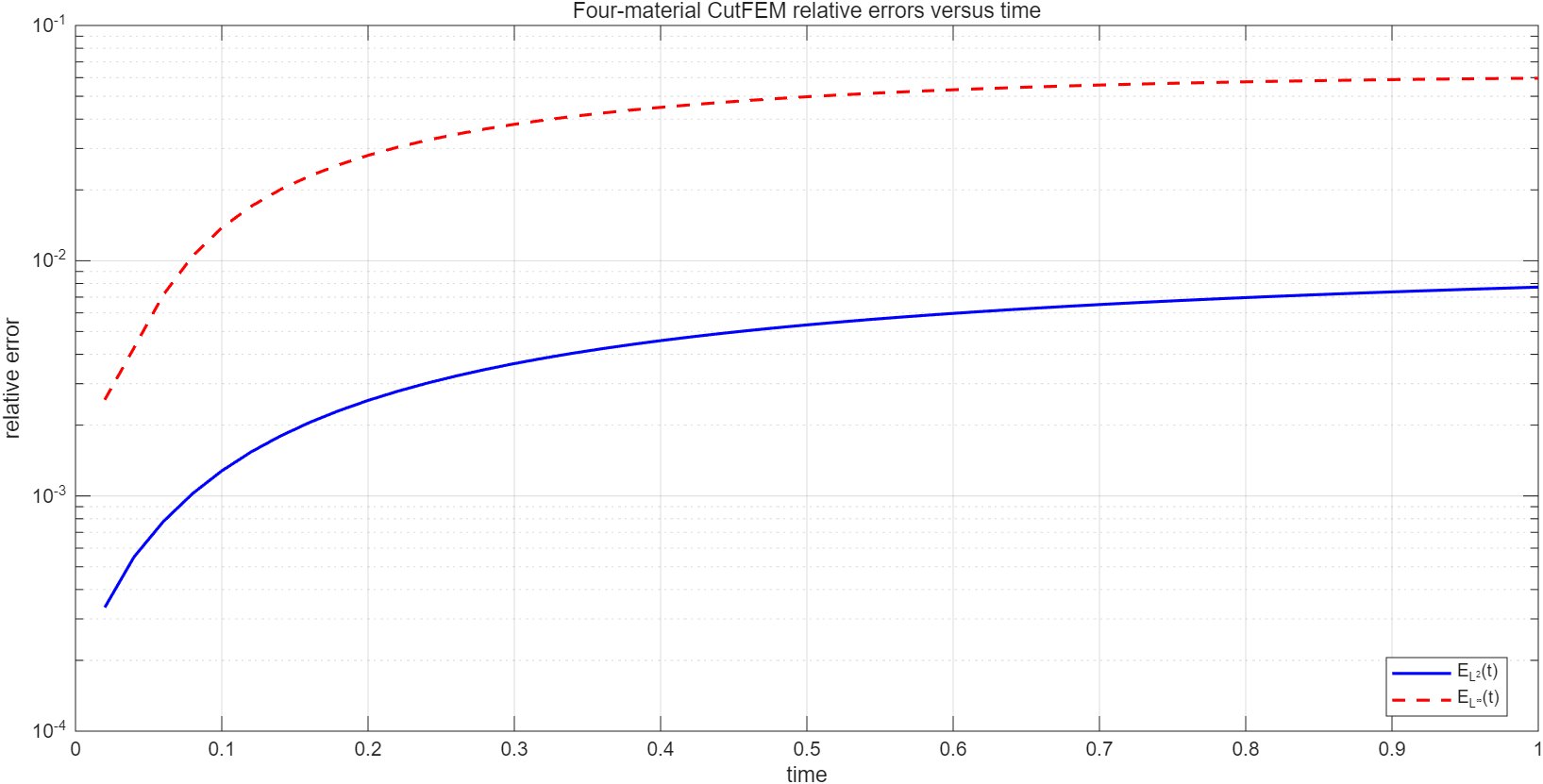}
\caption{The evolutive relative errors of temperature field.}
\label{fig:errorhistory}
\end{figure}

\begin{figure}[!htb]
\centering
\begin{minipage}[c]{0.48\textwidth}
  \centering
  \includegraphics[width=0.8\linewidth,totalheight=2.0in]{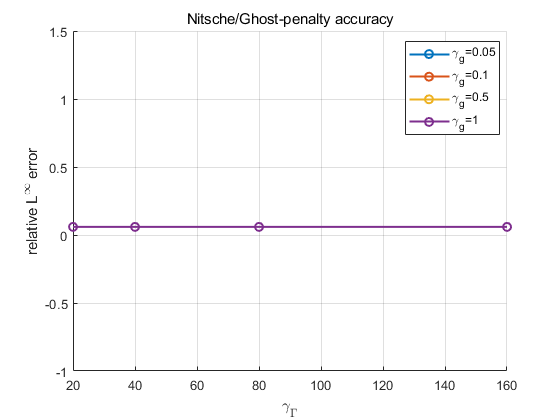}\\
  (a)
\end{minipage}
\begin{minipage}[c]{0.48\textwidth}
  \centering
  \includegraphics[width=0.8\linewidth,totalheight=2.0in]{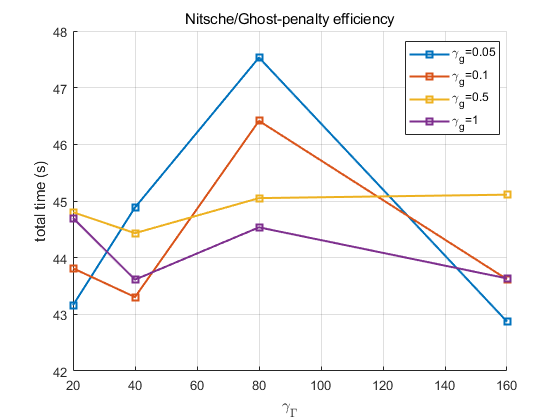}\\
  (b)
\end{minipage}
\caption{The effects of the Nitsche penalty function and the Ghost-penalty function on the accuracy and efficiency of CutFEM.}
\label{fig:parameter}
\end{figure}

\begin{table}[htbp]
\centering
\caption{Comparison between CutFEM and refined FEM.}
\label{tab:2dcompare}
\begin{tabular}{cccccc}
\toprule
Method & Elements & Dofs & $L^2$ error & $L^\infty$ error & Time \\
\midrule
CutFEM & $48\times24$ & 1401 & 0.7720\% & 5.9693\% & 43.92s \\
Refined FEM & $192\times96$ & 18721 & - & - & 73.08s \\
\bottomrule
\end{tabular}
\end{table}
From Figure~\ref{fig:temperaturecompare} and Figure~\ref{fig:errorcompare}, it can be seen that the numerical solutions obtained by the two computational methods both form a high-temperature region in the chip area and diffuse toward the substrate and molding compound, but there are still certain differences in the local high-temperature region and near the material interfaces. This is because CutFEM includes Nitsche terms and interior face stabilization terms, while the refined FEM method adopts strongly enforced outer boundary conditions and a continuous $P_1$ piecewise coefficient form. According to the results in Figure~\ref{fig:errorhistory}, it can be concluded that the proposed CutFEM is robust without blow-up
phenomenon for long-time thermal simulation. Figure~\ref{fig:parameter} illustrates that Nitsche and ghost-penalty parameters have limited impact on the accuracy and efficiency of CutFEM for 2D thermal simulation of multi-material package structure. In addition, from Table~\ref{tab:2dcompare}, it can be found that the mesh cost of CutFEM is far smaller than that of refined FEM, and the computational efficiency is also improved.

\subsection{Three-dimensional electronic packaging structure}
This example computes the transient heat conduction behavior of a three-dimensional electronic packaging structure. The three-dimensional electronic packaging structure consists of four materials: substrate, epoxy molding compound, chip, and solder balls. Its computational domain is $\Omega=(x_1,x_2,x_3)=[-7,7]\times[-5,5]\times[0,3.62]$. The top surface of the substrate is $x_3=0.70$, the solder balls have a radius of $0.34$ and their centers are located at $x_3=0.96$, so the lower ends of the solder balls $x_3=0.70$ are in contact with the substrate; the bottom surface of the chip is $x_3=1.22$, the upper ends of the solder balls $x_3=1.22$ are in contact with the chip, and the top surface of the chip is $x_3=1.87$. The molding compound fills the remaining region, as shown in Figure~\ref{fig:package3d}.
\begin{figure}[!t]
\centering\includegraphics[width=\columnwidth]{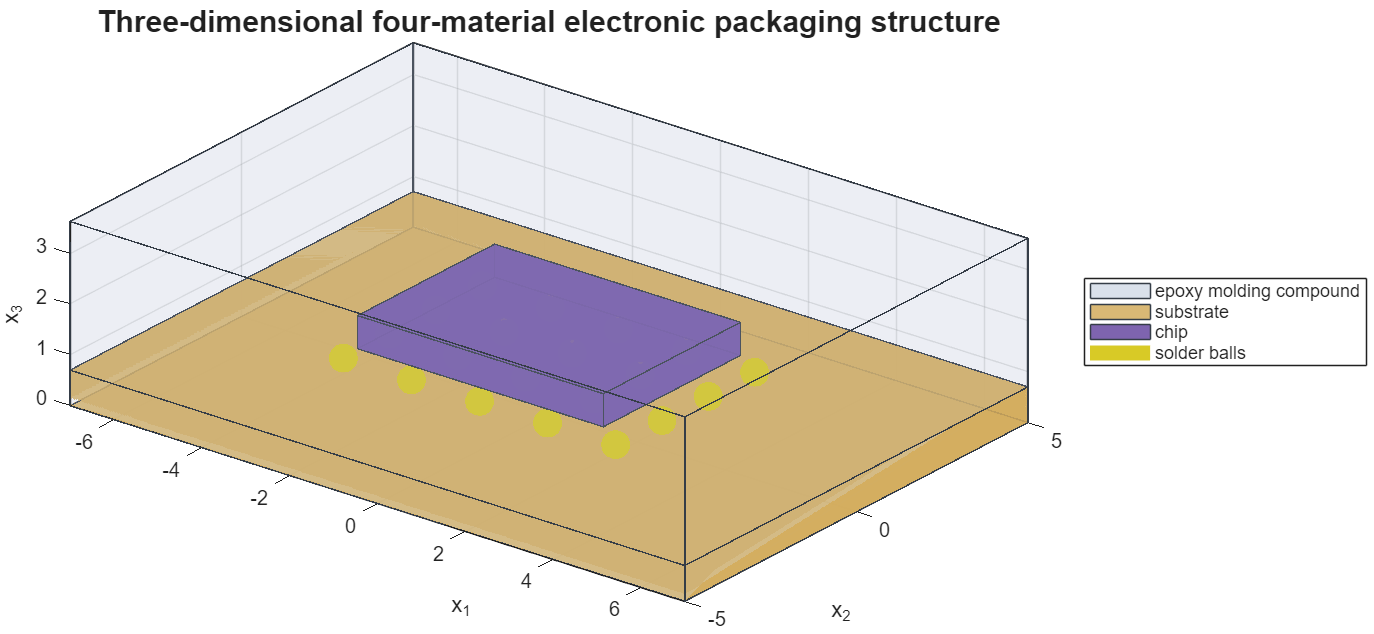}
\caption{Schematic diagram of the three-dimensional four-material electronic package.}
\label{fig:package3d}
\end{figure}

A internal heat source of $0.04$ is applied to the chip region of the three-dimensional electronic packaging structure, and both the initial temperature and the outer boundary temperature are taken as $25$. In addition, the four materials of the three-dimensional electronic packaging structure¡ªsubstrate, epoxy molding compound, chip, and solder balls¡ªuse the same material parameters as those in the two-dimensional example. Next, based on the CutFEM method and algorithm established above, the heat conduction problem of the electronic packaging structure is solved. The temperature continuity condition and heat flux continuity condition on the material interfaces are weakly enforced using the multi-space Nitsche method, the ghost penalty near the interfaces is used to stabilize small cut cells, and the backward Euler scheme is used for time discretization. The computational time interval is uniformly taken as $t\in[0,1]$, and the time step is set as
$\Delta t=0.02$. For the three-dimensional computation, a regular Cartesian background mesh is used. The background hexahedral mesh is first divided into 24, 18, and 12 small hexahedra along the $x_1$, $x_2$ and $x_3$ directions, respectively, and then each small hexahedron is split into six tetrahedra. In addition, the mesh for the fine finite element method is obtained by dividing the domain into 48, 36, and 24 small hexahedra along the $x_1$, $x_2$ and $x_3$ directions, respectively, and then splitting each small hexahedron into six tetrahedra. Figure~\ref{fig:mesh3d} shows the CutFEM mesh and refined FEM mesh.
\begin{figure}[!t]
\centering\includegraphics[width=\columnwidth]{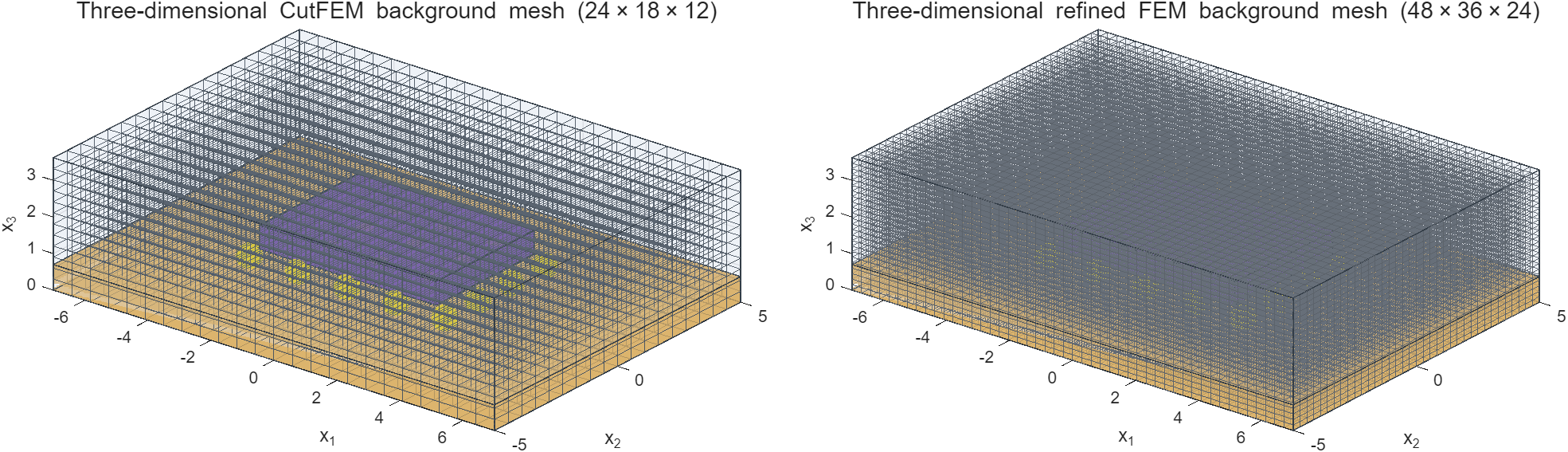}
\caption{Three-dimensional background tetrahedral mesh.}
\label{fig:mesh3d}
\end{figure}

After the numerical simulation is completed, Figure~\ref{fig:parametermesh3D} presents a comparison of the temperature fields computed by CutFEM and fine FEM at the final time $t=1.0$; Figure~\ref{fig:errorcompare3D} shows the absolute error distribution; Figure~\ref{fig:errorhistory3D} shows the evolutive relative errors of temperature field; Figure~\ref{fig:parameter3} presents an analysis of the effects of the Nitsche penalty function and the ghost-penalty function on the accuracy and efficiency of CutFEM. Table~\ref{tab:3dcompare} provides a comparison of the computational resource costs, numerical accuracy, and computational efficiency of the two methods.
\begin{figure}[!htb]
\centering
\begin{minipage}[c]{0.48\textwidth}
  \centering
  \includegraphics[width=0.98\linewidth,totalheight=2.4in]{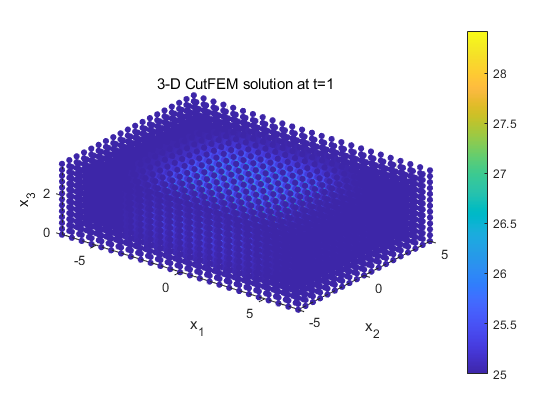}\\
  (a)
\end{minipage}
\begin{minipage}[c]{0.48\textwidth}
  \centering
  \includegraphics[width=0.98\linewidth,totalheight=2.4in]{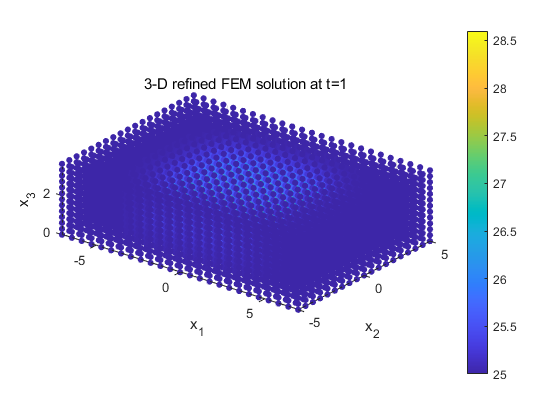}\\
  (b)
\end{minipage}
\begin{minipage}[c]{0.58\textwidth}
  \centering
  \includegraphics[width=0.98\linewidth,totalheight=1.0in]{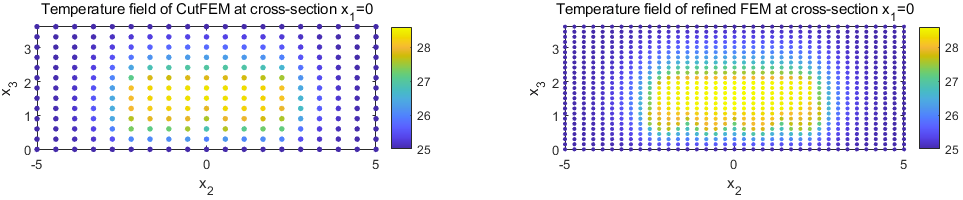}\\
  (c)
\end{minipage}
\begin{minipage}[c]{0.58\textwidth}
  \centering
  \includegraphics[width=0.98\linewidth,totalheight=1.0in]{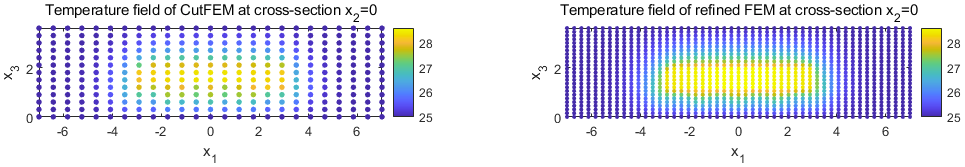}\\
  (d)
\end{minipage}
\caption{Temperature fields at $t=1.0$ obtained by CutFEM and refined FEM.}
\label{fig:parametermesh3D}
\end{figure}

\begin{figure}[htbp]
\centering
\includegraphics[width=\columnwidth]{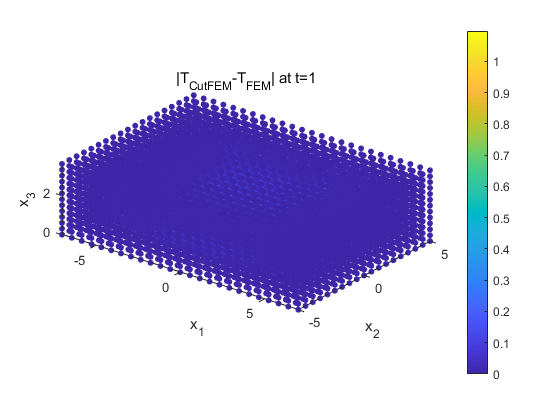}
\caption{Absolute value of the error between CutFEM and refined FEM.}
\label{fig:errorcompare3D}
\end{figure}

\begin{figure}[!t]
\centering\includegraphics[width=\columnwidth]{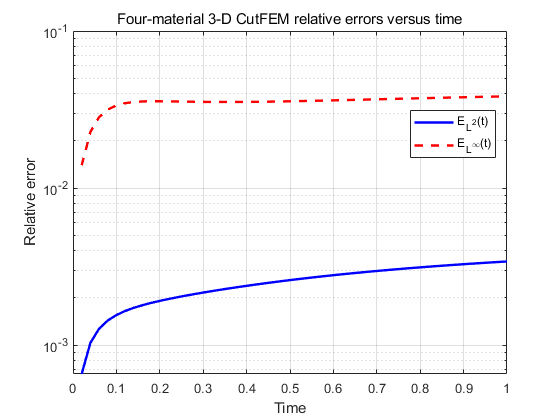}
\caption{The evolutive relative errors of temperature field.}
\label{fig:errorhistory3D}
\end{figure}

\begin{figure}[!htb]
\centering
\begin{minipage}[c]{0.48\textwidth}
  \centering
  \includegraphics[width=0.8\linewidth,totalheight=2.0in]{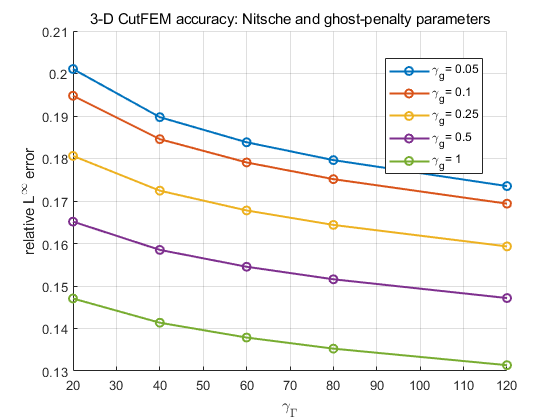}\\
  (a)
\end{minipage}
\begin{minipage}[c]{0.48\textwidth}
  \centering
  \includegraphics[width=0.8\linewidth,totalheight=2.0in]{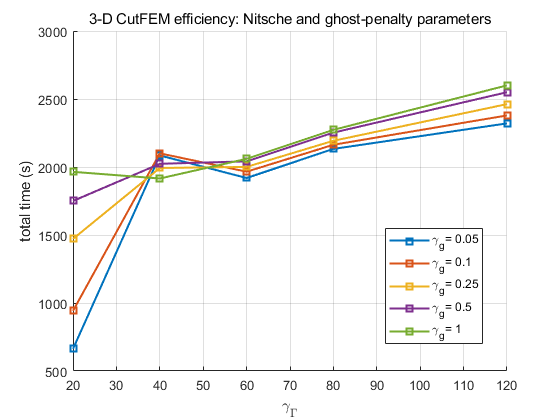}\\
  (b)
\end{minipage}
\caption{The effects of the Nitsche penalty function and the Ghost-penalty function on the accuracy and efficiency of CutFEM.}
\label{fig:parameter3}
\end{figure}

\begin{table}[htbp]
\centering
\caption{Comparison between CutFEM and refined FEM.}
\label{tab:3dcompare}
\begin{tabular}{cccccc}
\toprule
Method & Elements & Dofs & $L^2$ error & $L^\infty$ error & Time \\
\midrule
CutFEM & $24\times18\times12$ & 7878 & 0.3405\% & 3.8370\% & 286.83s \\
Refined FEM & $48\times36\times24$ & 45325 & - & - & 1653.84s \\
\bottomrule
\end{tabular}
\end{table}

It can be seen from Figure~\ref{fig:parametermesh3D} and Figure~\ref{fig:errorcompare3D} that the numerical solutions obtained by both computational methods form a high-temperature region in the chip area and diffuse toward the substrate and the molding compound. However, certain differences still exist in the local high-temperature region and near the material interfaces. This is because CutFEM includes Nitsche terms and interior edge stabilization terms, whereas the revised FEM adopts enforced outer boundary conditions and a continuous $P_1$ piecewise coefficient formulation. According to the results in Figure~\ref{fig:errorhistory3D}, it can be concluded that the proposed CutFEM is robust without blow-up
phenomenon for long-time thermal simulation. Figure~\ref{fig:parameter3} illustrates that Nitsche and ghost-penalty parameters have limited impact on the accuracy and efficiency of CutFEM for 3D thermal simulation of multi-material package structure. In addition, it can be found from Table~\ref{tab:3dcompare} that the mesh overhead of CutFEM is far smaller than that of the fine finite element method, and the computational efficiency is also significantly improved.

\section{CONCLUSIONS AND OUTLOOKS}
This paper establishes an effective CutFEM computational framework and its convergence theory for simulating complex heat conduction problems in electronic packaging structures. The main conclusions are as follows:
\begin{enumerate}
\item For multi-material electronic packaging structures with complex interfaces, CutFEM can represent the interfaces of chips, solder balls, substrates, and molding compounds on a regular background mesh, avoiding repeated generation of body-fitted meshes when the interfaces change;
\item The Nitsche method can effectively weakly enforce outer boundary and material interface conditions, and the ghost penalty can effectively improve stability when cut cells are too small;
\item Two-dimensional and three-dimensional numerical examples show that CutFEM can efficiently and accurately simulate complex heat conduction problems in electronic packaging structures.
\end{enumerate}

Based on the research on the CutFEM method for heat conduction problems in electronic packaging structures, future work will further study the CutFEM method and convergence theory for heat conduction problems with temperature-dependent thermal conductivity and interfacial thermal resistance, as well as for complete thermal-mechanical-electrical multi-field coupling problems. In addition, efficient preconditioning methods, model order reduction methods, and parallel algorithms suitable for complex electronic packaging structures will also be studied to further improve the numerical accuracy and computational efficiency of the CutFEM method.

\section*{Acknowledgments}
This work was supported by the National Natural Science Foundation of China (No.\hspace{1mm}12471387), Natural Science Basic Research Program of Shaanxi (No.\hspace{1mm}2026JC-YXQN-017), Xidian University Specially Funded Project for Interdisciplinary Exploration (No.\hspace{1mm}TZJH2024008), the Fundamental Research Funds for the Central Universities (No.\hspace{1mm}QTZX25082).

\end{document}